\newtheorem{theorem}{Theorem}
\titlespacing*{\section}{0pt}{1pt}{1pt}
\titlespacing*{\subsection}{0pt}{1pt}{1pt}
\titlespacing*{\paragraph}{0pt}{1pt}{1pt}
\newaliascnt{problem}{theorem}
\newtheorem{problem}[problem]{Problem}
\crefname{problem}{problem}{problems}
\Crefname{problem}{Problem}{Problems}
\DeclareMathOperator{\cartesian}{\Box}
\author{Samuel Schneider, Torsten Ueckerdt}
\title{Number of Edges in 3-Connected Graphs with Cyclic Neighborhoods}
\begin{document}    
    An \emph{independent cut} of a graph $G$ is a vertex set $S\subset V(G)$ such that $S$ is an independent set and $G - S$ is disconnected.
    Chen and Yu~\cite{edge_bound_independent_cut} show that every $n$-vertex graph with at most $2n-4$ edges has an independent cut.
    This bound is tight as the graph consisting of $n-2$ triangles sharing one edge has no independent cut and $2n-3$ edges. 
    
    Chernyshev, Rauch and Rautenbach~\cite{conjecture_cycle_in_neighborhood} introduce \emph{forest cuts}, i.\,e., vertex separators that induce a forest.
    They conjecture that, similar to the result by Chen and Yu, every $n$-vertex graph with less than $3n-6$ edges has a forest cut.\footnote{According to Chernyshev, Rauch and Rautenbach~\cite{conjecture_cycle_in_neighborhood}, this has been independently conjectured by Atsushi Kaneko at the 7th C5 Graph Theory Workshop (Kurort Rathen) in 2003.} 
    As an intermediate goal they state the following problem:

    \begin{problem}\label{problem}
        How many edges must an $n$-vertex $3$-connected graph have such that the neighborhood of every vertex contains a cycle?
    \end{problem}

    Li, Tang and Zhan~\cite{long_proof} resolve \Cref{problem}:
    Every such graph has at least $\frac{15}{8}n$ edges, while there are such graphs with exactly $\frac{15}{8}n$ edges.
    We give a much shorter proof for this.

	\begin{theorem} \label{thm:upper_bound}
        Let $G$ be a $3$-connected $n$-vertex graph such that for every vertex $v\in V(G)$ the graph induced by its neighborhood $G[N(v)]$ contains a cycle.
        Then $|E(G)| \geq \frac{15}{8}n$.
    \end{theorem}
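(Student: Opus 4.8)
The plan is to absorb all of the low-degree vertices into a single local inequality over the remaining vertices and then show that inequality holds locally. First I would extract the two structural consequences of the hypothesis. Since $G[N(v)]$ contains a cycle, every vertex has degree at least $3$; and if $\deg(v)=3$ then the three vertices of $N(v)$ must themselves form the cycle, i.e.\ a triangle, so $\{v\}\cup N(v)$ induces a $K_4$. I then claim that, unless $G=K_4$, the degree-$3$ vertices form an independent set all of whose neighbours have degree at least $4$: if a degree-$3$ vertex had a degree-$3$ neighbour, the forced $K_4$ would contain two degree-$3$ vertices $v,v'$, and deleting the other two vertices of that $K_4$ would leave $\{v,v'\}$ separated from the rest, a $2$-cut contradicting $3$-connectivity.

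Let $A$ be the set of degree-$3$ vertices and $B=V(G)\setminus A$, and for $b\in B$ let $m_b$ be the number of neighbours of $b$ lying in $A$. Because $A$ is independent we have $|E(G)|=3|A|+|E(G[B])|$ and $\sum_{b\in B}m_b=3|A|$, while $2|E(G[B])|=\sum_{b\in B}(\deg(b)-m_b)$. Substituting $n=|A|+|B|$ and $|A|=\tfrac13\sum_b m_b$, a short calculation collapses everything to
\[
|E(G)|-\tfrac{15}{8}n \;=\; \tfrac18\sum_{b\in B}\bigl(4\deg(b)-m_b-15\bigr).
\]
So it suffices to prove $\sum_{b\in B}\bigl(4\deg(b)-m_b-15\bigr)\ge 0$. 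Each summand is nonnegative when $\deg(b)\ge 5$ (then $4\deg(b)-15\ge\deg(b)\ge m_b$) and when $\deg(b)=4$ with $m_b\le 1$; the only negative summands, each exactly $-1$, come from degree-$4$ vertices with precisely two degree-$3$ neighbours. Call these \emph{bad} vertices.

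The core is to cover the $-1$'s of the bad vertices. I would first show, again by a $2$-cut argument, that a degree-$4$ vertex has at most two degree-$3$ neighbours, and that a bad vertex $b$ lies in an induced $K_5-e$: its two degree-$3$ neighbours are the two degree-$3$ vertices of that $K_5-e$ (the endpoints of the missing edge) and $b$ is one of the three degree-$4$ ``spine'' vertices. Inside such a $K_5-e$, if all three spine vertices had degree $4$ in $G$ the five vertices would form a whole component, forcing $G=K_5-e$; hence, apart from this one exceptional graph, every such $K_5-e$ contains a spine vertex $w$ of degree at least $5$, whose surplus $4\deg(w)-m_w-15$ I use to pay the $-1$ of each bad spine vertex of that $K_5-e$.

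The step I expect to be the main obstacle is the global bookkeeping of these payments, since one high-degree vertex $w$ may be the spine of several $K_5-e$'s and must cover all of them. Here I would use that each such copy through $w$ contributes two distinct degree-$3$ neighbours and at least two non-degree-$3$ partners to $N(w)$; so a $w$ lying in $g$ copies satisfies $m_w\ge 2g$, $\deg(w)\ge 2g+2$, and $m_w\le\deg(w)-2$, whence its surplus is at least $3\deg(w)-13$, which dominates the total charge of at most $2g$ it receives (using $\deg(w)\ge\max(5,2g+2)$). The unique tight case $g=1,\ \deg(w)=5$ is exactly what pins the ratio to $\tfrac{15}{8}$. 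Finally I would dispose of the two small graphs $K_4$ and $K_5-e$, on which the inequality degenerates, by direct inspection, so that $\sum_{b\in B}\bigl(4\deg(b)-m_b-15\bigr)\ge 0$ and hence $|E(G)|\ge\tfrac{15}{8}n$ holds for every remaining graph.
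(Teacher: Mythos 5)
Your route is genuinely different from the paper's, and for all graphs other than two small exceptions it is sound. The paper never needs your charging step: it first removes duplicate neighborhoods by a reduction --- if two degree-$3$ vertices $u,v$ have $N(u)=N(v)$, then $G-u$ is again $3$-connected and every neighborhood still contains a cycle, so a minimal counterexample has pairwise distinct neighborhoods on $V_3$ --- and then shows that every vertex of degree at least $4$ has at least \emph{three} neighbors of degree at least $4$, i.e.\ $m_b\le\deg(b)-3$ for every $b\in B$ in your notation. Feeding that into your identity makes every summand satisfy $4\deg(b)-m_b-15\ge 3\deg(b)-12\ge 0$, so under the paper's reduction there are no bad vertices at all and no discharging is needed; the paper packages this as the two degree-sum inequalities \eqref{eq:1} and \eqref{eq:2}, whose combination $8|E(G)|\ge 15n$ is exactly your identity in inequality form. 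In short, your induced $K_5-e$ configurations are precisely the twin configurations the paper deletes up front: the reduction buys brevity, while your argument is non-inductive, stays inside a fixed graph, and exhibits the tight local structure (a degree-$5$ spine vertex of a single $K_5-e$) that pins the constant $\frac{15}{8}$. Your bookkeeping for the charging does close: charged copies through a common spine vertex $w$ cannot share degree-$3$ vertices (a triangle carrying three or more twins gives every spine vertex $m\ge 3$, so no bad vertex arises there), which justifies $m_w\ge 2g$, and then $3\deg(w)-13\ge 2g$ holds for $\deg(w)\ge\max(5,2g+2)$.

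One concrete step of yours fails, and it in fact exposes a gap in the paper as well: $K_4$ and $K_5-e$ cannot be ``disposed of by direct inspection,'' because they \emph{violate} the bound. $K_4$ is $3$-connected, every neighborhood induces a triangle, and it has $6<\frac{15}{8}\cdot 4=\frac{15}{2}$ edges; $K_5-e$ is $3$-connected, every neighborhood contains a triangle, and it has $9<\frac{15}{8}\cdot 5=\frac{75}{8}$ edges. So the statement as written is false for these two graphs, and your argument can only establish it for all remaining graphs (equivalently, for $n\ge 6$); you should state the exclusion rather than claim the inequality checks out there. The paper shares this blind spot: its claim that $N(u)\cap N(v)$ is a separator for adjacent $u,v\in V_3$ is false in $K_4$, where deleting those two vertices leaves just the edge $uv$, and its twin-deletion reduction applied to $K_5-e$ produces exactly $K_4$, so the implicit induction bottoms out at a counterexample. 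Your analysis, which forces these two graphs to surface as the all-degree-$4$-spine case, is in this one respect more careful than the paper's proof --- but the final sentence of your plan must be corrected from ``the inequality holds by inspection'' to ``these two graphs are genuine exceptions.''
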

    
    \begin{proof}
        Since $G$ is $3$-connected, every vertex has degree at least~$3$.
        Consider the partition of $V(G)$ into the sets $V_3 \coloneq \{v \in V(G) \mid \deg(v) = 3\}$ and $V_{\geq 4} \coloneq \{ v\in V(G) \mid \deg(v) \geq 4\}$.
        Note that the neighborhood of each $v \in V_3$ induces a triangle since $G[N(v)]$ contains a cycle.
        
        First, we show that $V_3$ is an independent set in $G$.
        Assume this is not the case.
        Then, there is a vertex $v \in V_3$ that has a neighbor $u \in V_3$.
        As both $N(u)$ and $N(v)$ induce triangles,
        it follows that $N(u) \cap N(v)$ is a separator of size~$2$ (see \Cref{fig:V_3_independent}) --- 
        A contradiction to $G$ being $3$-connected.

        \begin{figure}[H]
            \begin{minipage}{0.45\linewidth}
                \centering
                \includegraphics[page=1]{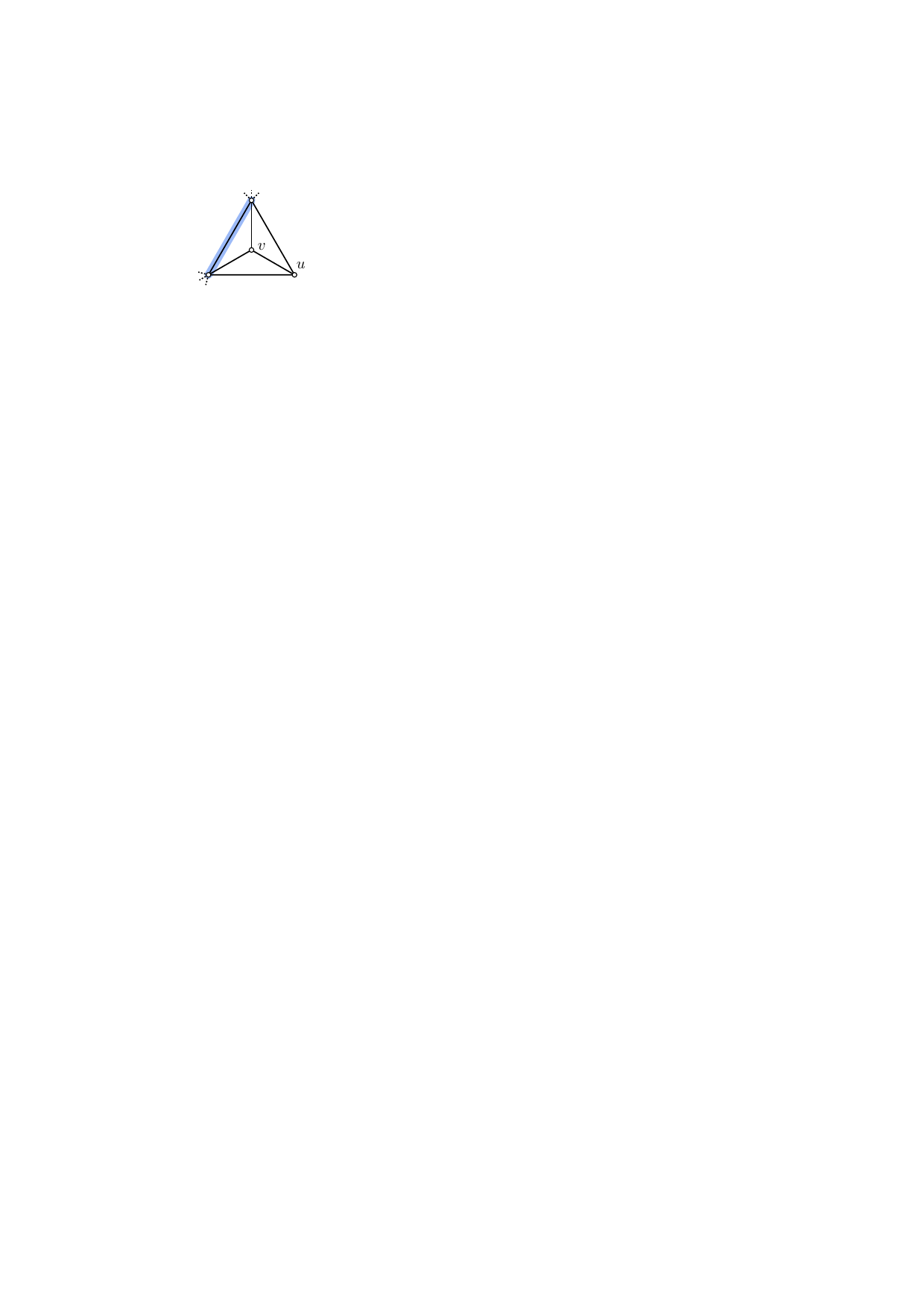}
                \caption{If two vertices $u,v\in V_3$ are adjacent, then $N(u) \cap N(v)$ is a separator of size~$2$ (highlighted in blue).}
                \label{fig:V_3_independent}
            \end{minipage}
            \hfill
            \begin{minipage}{0.45\linewidth}
                \centering
                \includegraphics[page=2]{figures/V_3_independent.pdf}
                \caption{Vertices $u,v \in V_3$ with $N(u)=N(v)$ and $u' \in N(u)$. A cycle in $N(u') - u$ highlighted in blue.}
                \label{fig:distinct_neighborhoods}
            \end{minipage}
        \end{figure}

        Now let us assume that there are two vertices $u,v \in V_3$ with $N(u) = N(v)$.
        Then the graph $G-u$ is still $3$-connected.
        Furthermore, every neighbor $u'$ of $u$ still has a cycle in its neighborhood in $G-u$, namely a triangle formed by $v$ and the other two neighbors of $u$ (see \Cref{fig:distinct_neighborhoods}).
        If $G$ has at least $\frac{15}{8}n$ edges we are clearly done. 
        Otherwise, $G-u$ has less edges than $G$ (compared to their number of vertices).
        Thus we may assume that for any two vertices $u,v \in V_3$ we have $N(u) \neq N(v)$.

        Next, we show that for every vertex $v \in V_{\geq 4}$ we have that $|N(v) - V_3| \geq 3$.
        As $\deg(v) \geq 4$, this clearly holds if $v$ has at most one neighbor in $V_3$.
        So assume $u,u' \in N(v) \cap V_3$ with $u \neq u'$.
        As $V_3$ is an independent set, all neighbors of $u$ and $u'$ are in $V_{\geq 4}$.
        Furthermore, each of $N(u)$ and $N(u')$ induces a triangle in $G$.
        As $v \in N(u) \cap N(u')$, it follows that $N(u) \cup N(u') \subseteq N(v) \cup \{v\}$.
        Now since $N(u) \neq N(u')$ (as argued earlier), this gives the desired $|N(v) - V_3| \geq 3$.

        Thus, for every $v \in V_{\geq 4}$ we have $\deg(v) \geq 3 + |N(v) \cap V_3|$.
        As every vertex in $V_3$ has three neighbors in $V_{\geq 4}$, we get:
        \begin{equation}
            2|E(G)| \geq \sum_v \deg(v)\geq 3n + 3|V_3| \label{eq:1}
        \end{equation}
        On the other hand, we can compute:
        \begin{equation}
            2|E(G)| = \sum_v \deg(v) \geq 4n - |V_3| \label{eq:2}
        \end{equation}
        The sum of \eqref{eq:1} and three times \eqref{eq:2} gives the desired:
        \[
            8|E(G)| \geq 3n+3|V_3| + 3(4n-|V_3|) = 15n \qedhere
        \]
    \end{proof}

    \begin{theorem}\label{thm:lower_bound}
        There are $3$-connected $n$-vertex graphs $G$ with $\frac{15}{8}n$ edges such that for every vertex $v\in V(G)$ the graph induced by its neighborhood $G[N(v)]$ contains a cycle.
    \end{theorem}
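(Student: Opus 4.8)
The plan is to reverse-engineer the construction from the equality conditions in the proof of \Cref{thm:upper_bound}. Equality requires both \eqref{eq:1} and \eqref{eq:2} to be tight: tightness of \eqref{eq:2} forces every vertex of $V_{\geq 4}$ to have degree exactly $4$, tightness of \eqref{eq:1} then forces each such vertex to have exactly one neighbor in $V_3$, and together $2|E(G)| = 3n+3|V_3| = 4n-|V_3|$ yields $|V_3| = n/4$ and $|V_{\geq 4}| = 3n/4$. Since $V_3$ is independent, each $v\in V_3$ has a triangle neighborhood inside $V_{\geq 4}$, and each degree-$4$ vertex has a \emph{unique} $V_3$-neighbor, these triangles are vertex-disjoint. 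Hence an extremal graph decomposes into $n/4$ vertex-disjoint copies of $K_4$ --- an apex vertex (in $V_3$) joined to a base triangle (in $V_{\geq 4}$) --- plus one extra edge at each of the $3\cdot n/4$ base vertices. This tells me precisely what to build.

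Concretely, I take $k := n/4$ disjoint copies of $K_4$, designate one vertex of each as its apex and the other three as base vertices, and add a perfect matching on the $3k$ base vertices (so $3k$ must be even, i.e.\ $8\mid n$). I encode the matching by the cubic graph $H$ whose vertices are the gadgets and whose edges are the matching edges, and I choose $H$ to be $3$-edge-connected, for instance the prism $H = C_{n/8}\cartesian K_2$ (valid once $n/8\geq 3$, giving infinitely many $n$). The edge count is then immediate: $6k$ edges inside the gadgets plus $3k/2$ matching edges give $\frac{15}{2}k = \frac{15}{8}n$. The cycle condition is equally immediate: the neighborhood of an apex is its base triangle, and the neighborhood of a base vertex $a$ contains the triangle formed by the apex and the two other base vertices of $a$'s own gadget.

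The only step needing real care is $3$-connectivity, which I establish by showing $G-\{x,y\}$ is connected for every pair $x,y$. The tool is that a $3$-edge-connected cubic graph is $2$-connected and remains connected after deleting any two edges (indeed $\kappa \le \lambda \le \delta = 3$ forces $\lambda = 3$). If $x,y$ lie in one gadget $Q$, all other gadgets survive intact and are interconnected by the edges of $H-q$ (connected, as $H$ is $2$-connected); the two survivors of $Q$ are adjacent in $K_4$ and at least one is a base vertex whose external edge reaches this remainder. If $x,y$ lie in distinct gadgets, each gadget keeps at least three mutually adjacent survivors, and the surviving external edges form $H$ with at most one edge deleted at each of the two affected gadget-vertices, i.e.\ at most two edges deleted overall, so $H$ stays connected and ties all gadgets together. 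Either way $G-\{x,y\}$ is connected. I expect the main obstacle to be organizing this case analysis so that the reduction to edge-connectivity of $H$ comes out cleanly; the remaining verifications are routine.
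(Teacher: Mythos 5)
Your construction is exactly the paper's: substitute a $K_4$ for each vertex of a $3$-connected cubic graph $H$ (the prism $C_{n/8}\cartesian K_2$, which is the paper's own example $C_t \cartesian K_2$), attaching the three $H$-edges to distinct vertices of the gadget, with the same edge count $|E(H)| + 6|V(H)| = \frac{15}{8}|V(G)|$ and the same neighborhood-triangle verification. The only differences are presentational and both correct: you motivate the construction by the equality analysis of \Cref{thm:upper_bound}, and you spell out the two-case $3$-connectivity argument via $3$-edge-connectivity of $H$ that the paper dismisses as ``easy to see.''
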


    \begin{proof}
        We start with a $3$-connected $3$-regular graph~$H$, e.\,g., $H = C_t \cartesian K_2$.
        We construct $G$ by replacing each vertex $v$ of $H$ by a copy of $K_4$ such that the three edges incident to $v$ in $H$ are incident to different vertices of the $K_4$ in $G$,
        see \Cref{fig:construction}.

        \begin{figure}[H]
            \centering
            \includegraphics{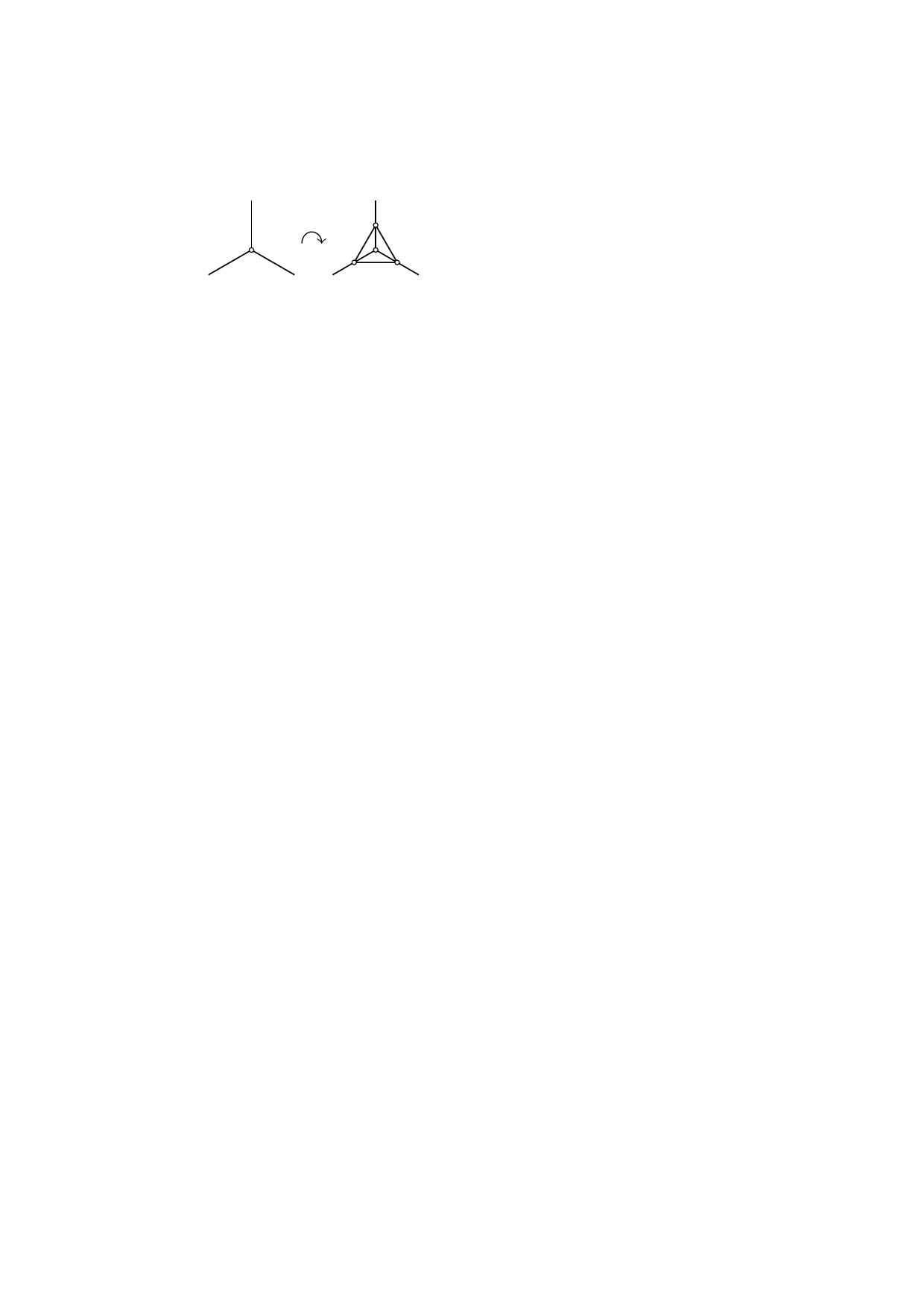}
            \caption{The substitution applied to every vertex of $H$.}
            \label{fig:construction}
        \end{figure}

        It is easy to see that $G$ is $3$-connected since $H$ is $3$-connected.
        Moreover, as each vertex $v \in V(G)$ is contained in a copy of $K_4$, the subgraph $G[N(v)]$ contains a triangle.

        Doing the counting, each vertex of $H$ corresponds in $G$ to a copy of $K_4$ with four vertices and six edges.
        Thus, we have $|V(G)| = 4\cdot |V(H)|$ and $|E(G)| = |E(H)| + 6\cdot |V(H)|$.
        Since $H$ is $3$-regular, we have $|E(G)| = \frac{15}{2}\cdot|V(H)| = \frac{15}{8}|V(G)|$.     
    \end{proof}
	
	\bibliographystyle{plainurl}
	\bibliography{bibliography.bib}
\end{document}